\documentclass[a4paper,12pt,reqno]{amsart}
\usepackage[utf8]{inputenc}
\usepackage{amsmath, amsthm, amssymb, enumerate, todonotes, cleveref}
\newtheorem{theorem}{Theorem}[section]
\newtheorem*{theorem*}{Theorem}
\newtheorem{lemma}[theorem]{Lemma}

\theoremstyle{definition}

\theoremstyle{remark}
\newtheorem{remark}[theorem]{Remark}
\numberwithin{equation}{section}
\usetikzlibrary{angles,quotes,patterns,arrows.meta,bending,decorations.markings,intersections}
\usetikzlibrary{calc}

\def\a{\mathbf{a}}
\def\c{\mathbf{c}}

\def\b{\mathbf{b}}

\def\etta{\boldsymbol{\eta}}

\def\zetta{\boldsymbol{\zeta}}
\def\llambda{\boldsymbol{\lambda}}

\title{On Askey's extension of  Clausen's Identity and its polynomial perturbation}

\author{Dmitrii Karp$^{1,2}$}
\address{$^{1}$Holon Institute of Technology, Holon, Israel} 
\address{$^{2}$Institute of Mathematics and Informatics, Bulgarian Academy of Sciences, Sofia, Bulgaria} 
\email{dimkrp@gmail.com} 

\author{Vinay Shukla$^{3,4,\#}$}{\thanks{$^{\#}$Corresponding author} }
\address{$^{3}$School of Mathematical Sciences,  Shanghai Jiao Tong University, 800 Dongchuan RD, Shanghai 200240, China} 
\address{$^{4}$Department of Mathematics, School of Computer Science Engineering and Technology, Bennett University, Greater Noida, India} 
\email{vinayshukla4321@gmail.com, vnshukla01@sjtu.edu.cn}

\allowdisplaybreaks

\begin{document}
\keywords{Clausen's Identity, polynomial perturbation, hypergeometric function, product formula, characteristic polynomial.}
	
\subjclass[2020] {33C05 33C20, 33C60, 05A19}
	
\begin{abstract}
 The celebrated Clausen’s identity expresses the square of the Gauss hypergeometric series ${}_2F_{1}(a,b;a+b+1/2;x)$  as a single hypergeometric ${}_3F_2$ series. Goursat showed in 1883 that replacing $1/2$ by $m+1/2$ leads to a hypergeometric series for the square whenever $m$ is a positive integer. Askey found this series explicitly for $m=1$.  The first goal of this paper is to extend this result by treating the case of any natural $m$.  The ${}_3F_{2}$ series on the right-hand side is thereby replaced by its perturbation by an explicit characteristic polynomial of degree 
$2m$, i.e., its coefficients are multiplied by values of this polynomial at nonnegative integers.  The second goal of this paper is to make one further step and replace the square of the Gauss function by its product with its perturbation by an arbitrary polynomial of degree $s\le{2m+1}$.  We show that such product remains hypergeometric and find its explicit form in terms of a polynomial perturbation of the ${}_3F_2$ series. We present an explicit formula for the characteristic polynomial whose degree is shown to be $2m+s$. 
\end{abstract}
\maketitle

\section{Introduction}

A series $\sum_{k}c_k$ is called hypergeometric if $c_{k+1}/c_{k}$ is a rational function of $k$. If  $R(t)=P(t)/Q(t)$ is (another) rational function, we will call the series  $\sum_{k}R(k)c_k$ a {\it rational perturbation} of the original series $\sum_{k}c_k$. Clearly, a rational perturbation of a hypergeometric series remains hypergeometric. When $Q(t)\equiv1$, i.e. $R(t)$ reduces to a polynomial, we will call $\sum_{k}R(k)c_k$ a {\it polynomial perturbation} of the original series $\sum_{k}c_k$. 

\medskip

Write  $(a)_k=\Gamma(a+k)/\Gamma(a)$ for the rising factorial (or Pochhammer's symbol). If $\a=(a_1,\ldots,a_p)$ is a $p$-tuple, denote by  $(\a)_{k}$ the product $(\a)_{k}=(a_1)_{k}\cdots(a_p)_{k}$ and by $\a+\alpha$ the $p$-tuple $(a_1+\alpha,\ldots,a_p+\alpha)$.
Now, if the original series is written in the standard form as a generalized hypergeometric function ${}_pF_q(\a;\b;x)$ \cite[(2.1.2)]{AAR}, then we can also write its rational perturbation using the standard notation as follows:
$$
\sum_{k=0}^{\infty}\frac{(\a)_{k}}{(\b)_kk!}R(k)x^k=R(0){}_{p+d+r}F_{q+d+r}\!\left(\left.\begin{array}{c}\a,1-\llambda,-\etta\\\b,-\llambda,1-\etta\end{array}\:\right\vert x\right),
$$
where  $\llambda=(\lambda_1,\ldots,\lambda_d)$, $\etta=(\eta_1,\ldots,\eta_r)$ are the zeros of $P$ and $Q$, respectively. This formula explains why the function on the right-hand side is frequently referred to as the hypergeometric function with integral parameter differences. The left-hand side, however, only involves the values of the function $R$ at nonnegative integers and does not require the knowledge of its (usually unknown) zeros. Hence, the following notation for the rational/polynomial perturbation turns out to be very useful:
$$
F\!\left(\begin{array}{c}\a\\\b\end{array}\,\bigg\vert\,R\,\bigg\vert\, x\right):=\sum_{k=0}^{\infty}\frac{(\a)_{k}}{(\b)_kk!}R(k)x^k.
$$
In the case of polynomial perturbation, $R(t)=P(t)$ of ${}_pF_{q}$ (or, indeed, of any power series), another way of writing  it is
\begin{equation}\label{eq:xDform}
F\!\left(\begin{matrix}\a\\\b\end{matrix}\:\bigg\vert P\:\bigg\vert x\right)=P(x\partial_{x})
{}_{p}F_{q}\!\left(\left.\begin{matrix}\a\\\b\end{matrix}\:\right\vert x\right),
\end{equation}
where $\partial_{x}$ denotes differentiation with respect to  $x$. This  formula is immediate on noting that  $x\partial_{x}[x^n]=nx^{n}$.  This representation will play a crucial role in Section~3. 

Recent decades have witnessed extensive research activity in the topic of polynomial perturbations of various summation and transformation formulas for hypergeometric functions, frequently referred to as ``extensions by integral parameter differences'' or similarly.  A brief overview can be found in the introduction to the recent paper \cite{karp2025polynomial} by the first author.

One important class of hypergeometric identities is represented by product formulas.
In general, a product of two hypergeometric series is not hypergeometric. Instead, the power series coefficients of a general ${}_pF_{q}\cdot{}_{r}F_{s}$  satisfy a recurrence relation whose order may be as large as\footnote{We thank Professor Yi Zhang for this order bound} $\max(q + 1, p)\max(s + 1, r) \left[ 1 + 2 \left( 1 + \max(p , q, r, s)\right) \max(q + 1, p) \max(s + 1, r) \right]$.  However, there are several known cases when the order of such a recurrence reduces to unity, i.e., the product can be expressed by a single hypergeometric series. The oldest result of this type is Euler's transformation for the Gauss hypergeometric function ${}_2F_1$ \cite[(2.2.7)]{AAR}, which can be viewed as a product formula of the type ${}_1F_{0}\cdot{}_2F_1={}_2F_{1}$.   Another celebrated example is 1828 Clausen's identity given by \cite{Clausen_1828}, \cite[pp.116, 180]{AAR}
\begin{equation}\label{eq:Clausen}
\Bigg[{}_{2}F_{1}\!\left(\begin{matrix}c,d\\c+d+1/2\end{matrix}\,\bigg\vert\,x\right)\Bigg]^2={}_{3}F_{2}\!\left(\begin{matrix}2c,2d,c+d\\c+d+1/2,2c+2d\end{matrix}\,\bigg\vert\,x\right).   
\end{equation}
The next important result in this direction is due to Goursat (1883) \cite{Goursat1883} who showed that the series $[{}_2F_1(a,b;c;x)]^2$ is hypergeometric if and only if $c=a+b+1/2+m$ with $m\in\mathbb{N}\cup\{0\}$. However, he gave no explicit formula. Motivated by a question of the Chudnovsky brothers, who investigated improvements of  Ramanujan's series for $\pi$, Dick Askey in \cite{Askey_1989} found the explicit form for Goursat's series for $m=1$, namely \cite[eq. (3.7)]{Askey_1989}
\begin{equation}\label{eq:Askey3.7}
\left[{}_2F_{1}\left(\begin{array}{c}
 a,b\\ a+b+\frac{3}{2}
\end{array} \middle| x\right) \right]^2 = {}_{5}F_{4}\left(\begin{array}{c}
 2a,2b, a+b, \zeta_1+1, \zeta_2+1\\ a+b+\frac{3}{2}, 2a+2b+2, \zeta_1, \zeta_2
\end{array} \middle| x\right),
\end{equation}
where $\zeta_1$, $\zeta_2$ are negated zeros of the second-degree polynomial
$$
t\to t^2+(4a+4b+1+8ab)t+2(a+b)(2a+1)(2b+1).
$$
Askey derived his result from a summation formula for $4$-balanced very well-poised ${}_7F_6$, using  Whipple's  ${}_7F_6$ transformation \eqref{Wipple Transform} (see also \cite[Theorem 3.4.4]{AAR}). Askey accompanied his derivation with the following comment: ``this method clearly extends to give the sum of the $2m$-balanced very well poised ${}_7F_6$, but the resulting identity is too messy to be worth stating until it is needed.''  
Our first goal in this paper is to give a formula hinted at by Askey, in a form which is not all that messy. 
This is accomplished in the subsequent Section~2, furnishing a hypergeometric expression for the square of ${}_2F_1(a,b;a+b+m+1/2;x)$  for any natural $m$.

Note that one can verify whether a given product of hypergeometric functions is hypergeometric by an application of a combination of Zeiberger's (to find a recurrence for the power series coefficients of the product) and Petkovšek's  (to verify whether the recurrence found in the previous step has a hypergeometric solution) algorithms. Alternatively, one can use algorithms of $D$-finite closure properties to deduce the differential equation satisfied by the product, and then transform it into the recurrence equations for the coefficients. For the square of unrestricted ${}_2F_{1}(a,b;c;x)$, this has been done manually in a recent paper \cite{Mao_Tian_2026} by Mao and Tian. They showed that the power series coefficients of $({}_2F_{1})^2$ satisfy a recurrence of order two and found an explicit form of this recurrence. 

In a recent paper \cite{karp2025polynomial}, the first author found an extension of Clausen's formula \eqref{eq:Clausen} by replacing ${}_2F_1(a,b;a+b+1/2;x)$ on the left-hand side by its rational perturbation of the form ${}F(a,b;a+b+m+1/2|F_m|x)$, where $F_m$ is an arbitrary polynomial of degree $m$. In Section~3 of the present paper, we replace the square by a product of ${}_2F_1(a,b;a+b+m+1/2;x)$ and its perturbation by a polynomial of an arbitrary degree $s$ satisfying $s\le{2m+1}$.

We conclude this introduction by mentioning other relatively recent analogues of Clausen's identity.  Such analogues of importance in number theory were discovered by Chan, Tanigawa, Yang, and Zudilin in \cite{CTYZ2011} using the theory of modular forms. Further investigations along these lines can be found in \cite{ASZ2011}.  In the same year 2011 as the above two references, another type of generalization of Clausen's identity \eqref{eq:Clausen} was published by Vidunas in \cite{Vidunas2011}.  His generalization is in terms of two-variate hypergeometric series, and the bottom parameter of the ${}_2F_1$ function on the left-hand side is unrestricted. 

\section{Contiguous extension of Clausen's identity}

Wipple's transformation \cite[(2.5)]{Askey_1989} between a very well-poised terminating ${}_7F_{6}$ and a $1$-balanced terminating ${}_4F_{3}$ is given by
\begin{align}\label{Wipple Transform}
&{}_7F_{6}\left(\begin{array}{c}
 a,\frac{a}{2}+1,b,c,d,e,-n\\ \frac{a}{2},a+1-b,a+1-c,a+1-d,a+1-e,a+n+1 
\end{array}\right) \nonumber\\
&=\frac{(a+1)_n(a+1-b-c)_n}{(a+1-b)_n(a+1-c)_n}{}_4F_{3}\left(\begin{array}{c}
 -n,a+1-d-e,b,c\\ b+c-n-a,a+1-d,a+1-e 
\end{array}\right),
\end{align}
where from here onward we will omit the argument $1$ from the notation of the hypergeometric function. We will need the following expansion for functions with integral (and positive) parameter differences given by Karlsson  \cite[Eq. (1)-(3)]{Karlsson_1971}
\begin{align}\label{Karlsson formula}
{}_{s+r}F_{t+r}\left(\begin{array}{c}
 \mathbf{a}, \mathbf{f+m}\\ \mathbf{c}, \mathbf{f}
\end{array} \middle|\, x\right) = \sum_{j_1=0}^{m_1} \cdots \sum_{j_r=0}^{m_r} A(\mathbf{j}) x^{|\mathbf{j}|} {}_{s}F_{t}\left(\begin{array}{c}
 \mathbf{a+|\mathbf{j}|} \\ \mathbf{c+|\mathbf{j}|} 
\end{array} \middle|\, x\right),
\end{align}
where $\mathbf{m}=(m_1,\ldots,m_r)$ comprises positive integers, $\mathbf{j}=(j_1,\ldots,j_r)$,  $|\mathbf{j}|=j_1+j_2+\cdots+j_r$ and $A(\textbf{j})$ is given by 
\begin{align*}
A(\textbf{j})= \binom{\mathbf{m}}{\mathbf{j}}\frac{(f_2+m_2)_{j_1} (f_3+m_3)_{j_1+j_2} \ldots (f_r+m_r)_{j_1+\ldots+j_{r-1}} (\textbf{a})_{|\mathbf{j}|}} { (f_1)_{j_1} (f_2)_{j_1+j_2} \ldots (f_r)_{|\mathbf{j}|} (\textbf{c})_{|\mathbf{j}|}}.
\end{align*}
Here binomial of a vector denotes the product of the binomials of the form $\binom{m_k}{j_k}$ and $(\mathbf{a})_{q}$ is the product of the rising factorials $(a_{k})_{q}$.

Our first result is the following theorem giving an extension of Askey's formula \eqref{eq:Askey3.7}, see also \cite[(3.7)]{Askey_1989}. 
\begin{theorem}\label{thrm:extended askey}
Suppose $m\ge0$ is an integer, and no bottom parameter equals a non-positive integer. Then, the following product formula holds:
\begin{equation}\label{eq:extendedAskey}
\left[{}_2F_{1}\left(\begin{array}{c}
 a,b\\ a+b+m+\frac{1}{2}
\end{array} \middle|\,x\,\right) \right]^2 = F\!\left(\!\!\begin{array}{c}
 2a,2b, a+b\\ a+b+m+\frac{1}{2}, 2a+2b+2m
\end{array}  \middle| \,P_{2m}^{a,b} \,\middle|\,x\right),
\end{equation}
where the polynomial $P_{2m}^{a,b}(t)$ of degree $2m$ is given by
\begin{equation}\label{eq:Pab}
P_{2m}^{a,b}(t)\!=\!\sum_{j=0}^{m}\!\binom{m}{j} \frac{(a)_m (b)_m (a+b+\frac{t}{2}+j)_{m-j}(-\frac{t}{2})_j}{(-1)^{j}(a+b)_{m}(a+\frac{1}{2})_{m}  (b+\frac{1}{2})_{m}}{}_3F_{2}\!\left(\!\!\begin{array}{c}
 -m+j,\frac{1-t}{2}-a-b-m,1/2\\ 1-a-m, 1-b-m
\end{array}\!\right).
\end{equation}
 \end{theorem}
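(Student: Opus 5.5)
The plan is to follow Askey's route and apply it at level $m$. First I compare coefficients of $x^n$ on both sides of \eqref{eq:extendedAskey}. With $c=a+b+m+\frac12$, the left-hand coefficient is the Cauchy product
$$\sum_{k=0}^n\frac{(a)_k(b)_k(a)_{n-k}(b)_{n-k}}{k!(n-k)!(c)_k(c)_{n-k}}.$$
Using $(\alpha)_{n-k}=(-1)^k(\alpha)_n/(1-\alpha-n)_k$, this becomes
$$\frac{(a)_n(b)_n}{n!(c)_n}\;{}_4F_3\!\left(\begin{array}{c}-n,a,b,1-c-n\\ c,1-a-n,1-b-n\end{array}\right).$$
This ${}_4F_3$ is balanced up to an integer shift of size $2m$, in Askey's language ``$2m$-balanced''.

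Next I read the identity \eqref{Wipple Transform} from right to left. I choose the parameters $(A,B,C,D,E)$ of the ${}_7F_6$ so that the ${}_4F_3$ on its right-hand side matches ours. The natural choice is $B=a$, $C=b$, with $A+1-D=c$ and $A+1-E=1-c-n$ (or a permutation of these), and $A$ fixed by the bottom parameter $B+C-n-A$. This turns the coefficient into a prefactor times a terminating very well-poised ${}_7F_6$.

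For $m=0$, this ${}_7F_6$ reduces to a summable case (Dougall's ${}_7F_6$), and we recover Clausen's formula \eqref{eq:Clausen}. For general $m$, the ${}_7F_6$ has a pair of parameters differing by integers tied to $m$. One top/bottom pair has the form $f+m$ over $f$, possibly after a contiguous shift. This is where Karlsson's expansion \eqref{Karlsson formula} enters: I apply it to split off this pair, with $r=1$ and $m_1=m$. The ${}_7F_6$ then becomes a finite sum over $0\le j\le m$ of terms $\binom{m}{j}(\cdots)_j$, each multiplying a very well-poised ${}_5F_4$ (or a Dougall-type ${}_7F_6$) with shifted parameters. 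Each of these inner series is summed in closed form by Dougall's theorem.

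After that, I collect all the Gamma/Pochhammer factors. The aim is to extract the main term
$$\frac{(2a)_n(2b)_n(a+b)_n}{n!\,(a+b+m+\frac12)_n(2a+2b+2m)_n},$$
using the duplication formula $(2\alpha)_{2k}=4^k(\alpha)_k(\alpha+\frac12)_k$ to pass between the $a,b$ and $2a,2b$ forms. What remains must be identified with $P_{2m}^{a,b}(n)$ as in \eqref{eq:Pab}. The factors $(a+b+\frac n2+j)_{m-j}(-\frac n2)_j$ should come from the $j$-dependent Pochhammers in Karlsson's expansion evaluated at $n$. The residual ${}_3F_2$ in $m-j$ with the parameter $\frac12$ should come from a second finite sum, namely re-expanding the ratio of Pochhammers that involve $a+\frac12$ and $b+\frac12$.

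Two further points close the argument. Since both sides are polynomial in $n$ after division by the main term, proving the identity for all integers $n\ge0$ gives equality of the power series. The degree $2m$ of $P_{2m}^{a,b}$ is read off directly from \eqref{eq:Pab}.

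The main obstacle is the bookkeeping in the last step: turning the Dougall evaluations into exactly the stated form \eqref{eq:Pab}. I expect the sum over $j$ to come out naturally in a different but equivalent form. A reordering, or a Vandermonde-type resummation, will then be needed to produce the inner ${}_3F_2$ with parameters $1-a-m$ and $1-b-m$. If this matching resists, the fallback is to verify that both expressions are polynomials in $n$ of degree $\le2m$ that agree at enough points, or to check the case $m=1$ against Askey's \eqref{eq:Askey3.7} and argue inductively via contiguous relations.
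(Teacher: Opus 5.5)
There is a genuine gap where you read \eqref{Wipple Transform} from right to left. To avoid a clash with your coefficient index $n$, write $-N$ for Whipple's terminating parameter. The ${}_4F_3$ on the right of \eqref{Wipple Transform} is Saalsch\"utzian for every choice of $A,B,C,D,E,N$: its denominator parameters sum to exactly $1$ more than its numerator parameters. For your coefficient series ${}_4F_3(-n,a,b,1-c-n;\,c,1-a-n,1-b-n)$ this difference is $1+2(c-a-b)=2m+2$. So no choice of parameters, and no permutation, can match it. Your proposed assignment even puts the numerator parameter $1-c-n$ into the denominator slot $A+1-E$.

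What your series actually is, is \emph{well-poised}: $1-n=a+(1-a-n)=b+(1-b-n)=(1-c-n)+c$. The paper therefore matches it with the \emph{left} side of \eqref{Wipple Transform}, after degenerating the very-well-poised ${}_7F_6$ to a well-poised ${}_4F_3$ via $C=A/2$, $D=(A+1)/2$. This forces $A=-n$ and puts $a$ into Whipple's terminating slot $-N$. Since \eqref{Wipple Transform} needs a non-positive integer there, an extra continuation argument is required. The paper shows both sides are rational in $a$ and agree at $a=0,-1,-2,\dots$. Your plan contains nothing of this kind.

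The Karlsson--Dougall step fails as well. Applied to a very-well-poised ${}_7F_6$, \eqref{Karlsson formula} shifts every remaining parameter by $j$. Paired parameters then sum to $1+A+2j$ rather than $1+(A+j)$, so the inner series are no longer well-poised and Dougall's theorem is unavailable. In the paper, Karlsson's expansion is applied instead to the Saalsch\"utzian ${}_4F_3$ on the right. This works after choosing $E=2A+N-m-B-C-D+1$ (Dougall's condition shifted by $m$), which makes the numerator parameter $A+1-D-E$ exceed the denominator parameter $B+C-N-A$ by exactly $m$. The resulting ${}_3F_2$'s are $(m-j+1)$-balanced and are \emph{not} summable. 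A terminating ${}_3F_2$ transformation converts each into a ${}_3F_2$ with numerator parameter $-m+j$. That is precisely the unsummed ${}_3F_2$ in \eqref{eq:Pab}; it does not come from re-expanding Pochhammer ratios in $a+\frac12$, $b+\frac12$.

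Your fallback does not repair this. That the Cauchy coefficient divided by the main term is a polynomial of degree $2m$ in $n$ is essentially the content of the theorem. It therefore cannot be assumed in order to reduce the proof to finitely many checks.
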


 \begin{remark}
If  $\boldsymbol{\lambda}= (\lambda_1,\lambda_2,\ldots,\lambda_{2m})$ are negated zeros of $P_{2m}^{a,b}(t)$, then we can rewrite \eqref{eq:extendedAskey} as follows
\begin{equation}\label{Generalized Clausen Identity}
\left[{}_2F_{1}\!\left(\!\begin{array}{c}
 a,b\\ a+b+m+\frac{1}{2}
\end{array} \middle|\,x\right) \right]^2\!=  {}_{3+2m}F_{2+2m}\left(\begin{array}{c}
 2a,2b, a+b, \boldsymbol{\lambda}+1\\ a+b+m+\frac{1}{2}, 2a+2b+2m, \boldsymbol{\lambda}
\end{array} \middle| x\right).
\end{equation}
 \end{remark}

\begin{proof}
Setting $e=2a+n-m-b-c-d+1$ in \eqref{Wipple Transform}, we will get ${}_4F_{3}$ on the right-hand side containing a bottom parameter  $f:=b+c-n-a$  and a top parameter $f+m$. Hence, we are in the position to apply \eqref{Karlsson formula} with $r=1$, $s=3$, $t=2$ and $f=b+c-n-a$ at $x=1$, yielding 
\begin{multline}\label{After using Karlsson formula}
\frac{(a+1)_n(a+1-b-c)_n}{(a+1-b)_n(a+1-c)_n}{}_4F_{3}\left(\begin{array}{c}
 -n,b+m+c-n-a,b,c\\ b+c-n-a,a+1-d,b+m+c+d-n-a
\end{array}\right)\\
= \sum_{j=0}^{m} A(j) {}_3F_{2}\left(\begin{array}{c}
 -n+j,b+j,c+j\\ a+1-d+j,b+m+c+d-n-a+j
\end{array} \right),
\end{multline}
where
\begin{align*}
 A(j) = \binom{m}{j} \frac{(-n)_j (b)_j (c)_j}{(a+1-d)_j (b+m+c+d-n-a)_j (b+c-n-a)_j}  . 
\end{align*}
The function ${}_3F_{2}$ on the right hand side is $m-j+1$ balanced. It can be transformed into a sum of $m-j+1$ terms by an application of \cite[Appendix, formula (VIII)]{Srinivasa_Jeugt_Raynal_Jagannathan_Rajeswari_1992} which reads
\begin{align*}
{}_3F_{2} \!\left(\!\!\begin{array}{c}
 A,B,-N\\ D,E
\end{array}\!\right)\!=\! (-1)^{N} \frac{(D-A)_N (D-B)_N}{(D)_N (E)_N} {}_3F_{2} \!\left(\!\!\begin{array}{c}
 1-S,1-D-N,-N\\ 1+A-D-N,1+B-D-N
\end{array}\!\right) ,
\end{align*}
where $S=D+E+N-A-B$. Setting $N = n-j$, $A=b+j$, $B=c+j$, $D=a+1-d+j$ and $E=b+m+c+d-n-a+j$, in the above formula leads to 
\begin{multline}\label{After applying Rao formula}
LHS~ of ~ \eqref{After using Karlsson formula} =  \sum_{j=0}^{m} A(j) \frac{(-1)^{n-j} (a+1-d-b)_{n-j} (a+1-d-c)_{n-j}}{(a+1-d+j)_{n-j} (b+c+d+m-n-a+j)_{n-j}} \\ 
\times{}_3F_{2}\left(\begin{array}{c}
 -m+j,d-a-n,-n+j\\ d-n+b+j-a,d-a-n+c+j
\end{array}\right).
\end{multline}
Next, we put $c=a/2$ and $d=(a+1)/2$ on the LHS of \eqref{Wipple Transform} and similarly in \eqref{After applying Rao formula}. Now, the balancing condition becomes $e=a+n-m-b+1/2$, and we obtain
\begin{align*}
&{}_4F_{3}\!\left(\!\!\begin{array}{c}
 a,b,e,-n\\ a+1-b,a+1-e,a+n+1 
\end{array}\!\right)=\frac{(a+1)_n(a/2+1-b)_n}{(a+1-b)_n(a/2+1)_n} \sum_{j=0}^{m} A(j)  \nonumber \\
&\times\frac{((a+1)/2-b)_{n-j} (1/2)_{n-j}}{((a+1)/2+j)_{n-j} (1/2-b-m)_{n-j}} {}_3F_{2}\!\left(\!\!\begin{array}{c}
 -m+j,\frac{1-a}{2}-n,-n+j\\ \frac{1-a}{2}-n+b+j,\frac{1}{2}-n+j
\end{array}\!\right),
\end{align*}
where
\begin{align*}
 A(j) = \binom{m}{j} \frac{(-n)_j (b)_j (a/2)_j}{((a+1)/2)_j (b+m-n+1/2)_j (b-n-a/2)_j}  . 
\end{align*}
Replace $a$ by $-k$, $k\in\mathbb{N}$, in the above formula and change $-n$ to (a new) $a$ to get
\begin{align}\label{Second replace}
&{}_4F_{3}\!\left(\!\!\begin{array}{c}
 -k,b,e,a\\ -k+1-b,-k+1-e,-k-a+1 
\end{array}\!\right)=\frac{(-k+1)_{-a}(-k/2+1-b)_{-a}}{(-k+1-b)_{-a}(-k/2+1)_{-a}} \sum_{j=0}^{m} A(j)   \nonumber \\
&\frac{((-k+1)/2-b)_{-a-j} (1/2)_{-a-j}}{((-k+1)/2+j)_{-a-j} (1/2-b-m)_{-a-j}} {}_3F_{2}\!\left(\!\!\begin{array}{c}
 -m+j,(k+1)/2+a,a+j\\ (k+1)/2+a+b+j,1/2+a+j
\end{array}\!\right),
\end{align}
where for any $w$ we used the notation $(z)_w=\Gamma(z+w)/\Gamma(z)$
and the balancing condition becomes  $e=1/2-k-a-b-m$. The coefficients $A_j$ now take the form 
\begin{align*}
 A(j)=\binom{m}{j} \frac{(a)_j (b)_j (-k/2)_j}{(-k/2+1/2)_j (b+m+a+1/2)_j (b+a+k/2)_j}   .
\end{align*}
To prove that \eqref{Second replace} is indeed a true equality, it suffices to show that both sides are rational functions of $a$. Indeed, as they  coincide for an infinite number of values $a=0,-1,-2\ldots$, they must be identical. 
Next, we will establish the rationality of the right hand side of \eqref{Second replace} in the variable $a$.

To simplify the coefficients at ${}_3F_2$ above apply Legendre's duplication formula, the recurrence relation for the Gamma function, and the identities $(x)_n=(-1)^n (1-x-n)_n$ and $(x)_{p+q}=(x)_p (x+p)_q$. This leads to 
\begin{align}\label{Simplification}
&\frac{(1-k)_{-a}(1-b-k/2)_{-a} (1/2-b-k/2)_{-a} (1/2)_{-a}}{(1-k-b)_{-a}(1-k/2)_{-a}(1/2-k/2)_{-a}(1/2-b-m)_{-a}} \nonumber \\
=&\frac{(1-k)_{-a}(1-k-2b)_{-2a} (1/2)_{-a}}{(1-k-b)_{-a}(-k+1)_{-2a} (1/2-b-m)_{-a}} \nonumber \\
=&\frac{\Gamma(1-k-2b-2a)\Gamma(1/2-a)\Gamma(1-k-b)\Gamma(1-k-a)\Gamma(1/2-b-m)}{\Gamma(1-k-2b)\Gamma(1/2)\Gamma(1-k-b-a)\Gamma(1-k-2a)\Gamma(1/2-b-m-a)} \nonumber \\
=&\frac{\Gamma(1/2-a)\Gamma(1-k-a)\Gamma(1/2-b)\Gamma(1-k-b)}{\Gamma(1-k-2b)\Gamma(1/2)\Gamma(1-k-2a) } \nonumber \\
& \times \frac{ \Gamma(1-k-2b-2a-2m)(1-k-b-a)_{-m}}{\Gamma(1/2-b-m-a)\Gamma(1-k-b-a-m)(1-k-2b-2a)_{-2m} (1/2-b-m)_m}.
\end{align}
Further, in view of 
\begin{align*}
&\Gamma(1/2-A) \Gamma(1-A-k)= 2^{2A} \sqrt{\pi}\frac{\Gamma(1-2A) }{(A)_k}, \\
&\frac{\Gamma(1-2A)}{\Gamma(1-k-2A)} = \frac{\Gamma(2A+k)}{\Gamma(2A)}=(2A)_k,
\end{align*}
expression in \eqref{Simplification} takes the form
\begin{align*}
&RHS~of~\eqref{Simplification}=\frac{2^{2a} \sqrt{\pi} 2^{2b} \sqrt{\pi}}{2^{2a+2b+2m} \sqrt{\pi} \sqrt{\pi}} \frac{(1-k-b-a)_{-m}}{(1-k-2b-2a)_{-2m} (\frac{1}{2}-b-m)_m} \nonumber\\
&\times \frac{\Gamma(1-2a) }{\Gamma(1-k-2a)(a)_k}\frac{\Gamma(1-2b) }{\Gamma(1-k-2b)(b)_k} \frac{\Gamma(1-k-2b-2a-2m)(a+b+m)_k }{\Gamma(1-2a-2b-2m)} \nonumber\\
& = \frac{1}{2^{2m}} \frac{(1-k-b-a)_{-m}}{(1-k-2b-2a)_{-2m} (\frac{1}{2}-b-m)_m} \frac{(2a)_k (2b)_k (a+b+m)_k}{(a)_k(b)_k (2a+2b+2m)_k}.
\end{align*}
Substituting the above into \eqref{Second replace}, we obtain (with $e=1/2-k-a-b-m$)
\begin{multline}\label{4F3 to M}
{}_4F_{3}\left(\begin{array}{c}
 -k,b,e,a\\ -k+1-b,-k+1-e,-k-a+1 
\end{array}\right) \\= \frac{(k+2b+2a)_{2m}}{ 2^{2m}(k+b+a)_{m}  (b+1/2)_m} \frac{(2a)_k (2b)_k (a+b+m)_k}{(a)_k(b)_k (2a+2b+2m)_k} P(k),
\end{multline}
where $P(k)$ is a polynomial given by
\begin{multline*}
P(k) =\sum_{j=0}^{m} \binom{m}{j} \frac{(-1)^{-j}  (a)_j (b)_j (-k/2)_j}{(a+b+k/2)_j ((k+1)/2+a+b)_{j} (a+1/2)_{j}} 
\\
\times{}_3F_{2}\left(\begin{array}{c}
 -m+j,(k+1)/2+a,a+j\\ (k+1)/2+a+b+j,1/2+a+j
\end{array}\right).    
\end{multline*}
This expression is manifestly rational in $a$ proving the claim that the right hand side of \eqref{Second replace} is rational in $a$.  Further, by the Cauchy product, we have
\begin{align*}
\left[{}_2F_{1}\left(\begin{array}{c}
 a,b\\ c
\end{array} \middle| x\right) \right]^2 &= \sum_{k=0}^{\infty} x^k \sum_{n=0}^{k} \frac{(a)_n (b)_n (a)_{k-n}(b)_{k-n}}{(c)_k (c)_{k-n} n! (k-n)!} \nonumber\\
& =\sum_{k=0}^{\infty} \frac{(a)_k (b)_k}{(c)_k k!} {}_4F_{3}\left(\begin{array}{c}
 -k,a,b,1-k-c\\ 1-k-a,1-k-b,c
\end{array}\right) x^k .
\end{align*}
Substituting $c=a+b+m+\frac{1}{2}$ here, we get
\begin{multline}\label{After putting c}
\left[{}_2F_{1}\left(\begin{array}{c}
 a,b\\ a+b+m+\frac{1}{2}
\end{array} \middle| x\right) \right]^2 \\= \sum_{k=0}^{\infty} \frac{(a)_k (b)_k}{(a+b+m+\frac{1}{2})_k k!} {}_4F_{3}\left(\begin{array}{c}
 -k,a,b,\frac{1}{2}-k-a-b-m\\ 1-k-a,1-k-b,a+b+m+\frac{1}{2}
\end{array}\right)x^k .
\end{multline}
The ${}_4F_3$ here matches that in \eqref{4F3 to M} due to the balancing condition  $e=1/2-k-a-b-m$ as given below  \eqref{Second replace}.  Hence, summing ${}_4F_3$ in \eqref{After putting c} by \eqref{4F3 to M}  we obtain
\begin{multline}\label{Before using Rao VI}
 \left[{}_2F_{1}\left(\begin{array}{c} 
 a,b\\ a+b+m+\frac{1}{2}
\end{array} \middle| x\right) \right]^2
\\
=\sum_{k=0}^{\infty}  \frac{(2a)_{k}(2b)_{k}(a+b+m)_{k}(2a+2b+k)_{2m}}{2^{2m}(a+b+m+\frac{1}{2})_{k} (2a+2b+2m)_{k}(a+b+k)_{m} (\frac{1}{2}+b)_{m}k!} P(k)x^k
\\
= \sum_{k=0}^{\infty}  \frac{(2a)_k (2b)_k (a+b)_k (2a+2b)_{2m}}{2^{2m}(a+b+m+\frac{1}{2})_k (2a+2b)_k (a+b)_{m}  (\frac{1}{2}+b)_m k!} P(k) x^k,
\end{multline}
where we have used the formulas
\begin{align*}
& \frac{(a+b+m)_k }{(a+b+k)_m} = \frac{(a+b)_k }{(a+b)_m}, \quad \frac{(2a+2b+k)_{2m} }{(2a+2b+2m)_k} = \frac{(2a+2b)_{2m} }{(2a+2b)_k}.
\end{align*}
Next, an application of \cite[Appendix (VI)]{Srinivasa_Jeugt_Raynal_Jagannathan_Rajeswari_1992} reading 
\begin{align*}
{}_3F_{2} \left(\begin{array}{c}
 A,B,-N\\ D,E
\end{array}\right) = (-1)^{N} \frac{(1-S)_N (B)_N}{(D)_N (E)_N} {}_3F_{2} \left(\begin{array}{c}
 E-B,D-B,-N\\ 1-B-N,S-N
\end{array}\right),
\end{align*}
where $S=D+E+N-A-B$, to ${}_3F_{2}$ in the definition of  $P(k)$, we get
\begin{align*}
&P(k) =\sum_{j=0}^{m} \binom{m}{j} \frac{(-1)^{j}  (a)_j (b)_j (-\frac{k}{2})_j}{(a+b+k/2)_j (a+\frac{1}{2})_{j} (\frac{k+1}{2}+a+b)_{j} }  \\
&\hspace{2cm}
\times\frac{ (\frac{1}{2}+b+j)_{m-j} (a+j)_{m-j}}{(\frac{1+k}{2}+a+b+j)_{m-j} (\frac{1}{2}+a+j)_{m-j}}
{}_3F_{2}\left(\!\!\begin{array}{c}
 -m+j,\frac{1+k}{2}+b,1/2\\ 1-a-m, \frac{1}{2}+b+j
\end{array}\right) \\
& =\!\sum_{j=0}^{m} \binom{m}{j} \frac{(-1)^{j}(a)_{m}(b)_{j}(\frac{1}{2}+b+j)_{m-j}(-k/2)_j}{(a+\frac{1}{2})_{m}(a+b+k/2)_j (a+b+k/2+1/2)_{m}} {}_3F_{2}\left(\!\!\begin{array}{c}
 -m+j,\frac{1+k}{2}+b,1/2\\ 1-a-m, \frac{1}{2}+b+j
\end{array}\right).
\end{align*}
Writing 
\begin{align*}
\frac{1}{(a+b+k/2)_j} = \frac{(a+b+k/2+j)_{m-j}}{(a+b+k/2)_m}  ,   
\end{align*}
and factoring out $(a+b+k/2)_m (a+b+k/2+1/2)_{m}=2^{-2m}(2a+2b+k)_{2m}$ from the inner sum, the RHS of \eqref{Before using Rao VI} can be written as 
 \begin{align}\label{After factoring out denominator k s}
\sum_{k=0}^{\infty}\frac{(2a)_k (2b)_k (a+b)_k (2b+2a)_{2m}}{(2a+2b+k)_{2m}(a+b+m+\frac{1}{2})_k (2a+2b)_k k!} P^{a,b}_{2m}(k) x^k,
\end{align}
where 
\begin{multline}\label{Mk Final}
P^{a,b}_{2m}(k)= \sum_{j=0}^{m} \binom{m}{j} \frac{(-1)^{j} (a)_{m} (b)_j (\frac{1}{2}+b+j)_{m-j}(a+b+k/2+j)_{m-j}(-k/2)_j}{(a+b)_{m}  (b+1/2)_m(a+1/2)_{m} }\\ \times{}_3F_{2}\left(\begin{array}{c}
 -m+j,\frac{1+k}{2}+b,1/2\\ 1-a-m, \frac{1}{2}+b+j
\end{array}\right) 
\end{multline}
is related to $P(k)$ by
$$
P^{a,b}_{2m}(k)=\frac{P(k)}{(a+b)_{m}(b+1/2)_m}.
$$ 
To make $P^{a,b}_{2m}$ look symmetric in  $a$, $b$, apply \cite[Appendix (V)] {Srinivasa_Jeugt_Raynal_Jagannathan_Rajeswari_1992}  reading 
\begin{align*}
{}_3F_{2} \left(\begin{array}{c}
 A,B,-N\\ D,E
\end{array}\right) = \frac{(D-B)_N }{(D)_N } {}_3F_{2} \left(\begin{array}{c}
 E-A,B,-N\\ 1+B-D-N,E
\end{array}\right)
\end{align*}
 to obtain
\begin{multline*}
P^{a,b}_{2m}(k) = \sum_{j=0}^{m} \binom{m}{j} \frac{(-1)^{j} (a)_{m} (b)_j (\frac{1}{2}+b+j)_{m-j} (a+b+\frac{k}{2}+j)_{m-j}(-\frac{k}{2})_j}{(a+b)_{m}(b+\frac{1}{2})_m(a+\frac{1}{2})_{m}}
\\
\times\frac{(b+j)_{m-j}}{(\frac{1}{2}+b+j)_{m-j}}{}_3F_{2}\left(\begin{array}{c}
 -m+j,\frac{1-k}{2}-a-b-m,1/2\\ 1-a-m, 1-b-m
\end{array}\right),
\end{multline*}
which simplifies to \eqref{eq:Pab}.

Finally, we return to \eqref{After factoring out denominator k s} and apply the transformation
\begin{align*}
\frac{(2b+2a)_{2m}}{(2b+2a)_{k}(2a+2b+k)_{2m}}=\frac{1}{(2b+2a+2m)_{k}}
\end{align*}
to get \eqref{eq:extendedAskey}.
\end{proof}

\begin{remark}
For $m=0$, formula \eqref{Generalized Clausen Identity} becomes the classical Clausen's identity \eqref{eq:Clausen}.
\end{remark}

\begin{remark}
The summation formula implied by equating coefficients in \eqref{eq:extendedAskey} and given in \eqref{4F3 to M} can be simplified to  
\begin{align*}
{}_4F_{3}\left(\begin{array}{c}
 -k,a,b,\frac{1}{2}-k-a-b-m\\ 1-k-a,1-k-b,a+b+m+\frac{1}{2}
\end{array}\right) = \frac{(2a)_{k} (2b)_{k} (a+b)_{k} }{(a)_k (b)_k(2a+2b+2m)_{k}}P_{2m}^{a,b}(k).
\end{align*}
Isolating $P_{2m}^{a,b}(k)$ from this formula, we can record another expression for  $P_{2m}^{a,b}$ using Lagrange or Newton interpolation in the form \cite[Lemma~1]{karp2025polynomial}
\begin{equation}\label{eq:LagrangeP}
			P_{n}(t)=\frac{(-1)^n}{n!}\sum\limits_{k=0}^{n}\binom{n}{k}(-t)_k(t-n)_{n-k}a_k=\sum\limits_{k=0}^{n}\frac{(-t)_{k}}{k!}\sum\limits_{j=0}^{k}(-1)^j\binom{k}{j}a_{j}
\end{equation}
for the polynomial of degree $n$ satisfying $P_n(j)=a_j$ for $j=0,\ldots,n$.  For example, the first formula yields:
\begin{multline*}
P_{2m}^{a,b}(t)=\frac{1}{(2m)!}\sum\limits_{k=0}^{2m}\binom{2m}{k}\frac{(-t)_k(t-2m)_{2m-k}(a)_k (b)_k(2a+2b+2m)_{k}}{(2a)_{k} (2b)_{k} (a+b)_{k}}
\\
\times {}_4F_{3}\left(\begin{array}{c}
 -k,a,b,\frac{1}{2}-k-a-b-m\\ 1-k-a,1-k-b,a+b+m+\frac{1}{2}
\end{array}\right).
\end{multline*}
\end{remark}

\begin{remark} 
The recurrence for the power series coefficients of the square $\big({}_2F_{1}\big)^2$ obtained recently in \cite[Theorem~2.1]{Mao_Tian_2026} yields, in particular, a recurrence for the values of $P_{2m}^{a,b}(t)$ at positive integers $t=n\in\mathbb{N}$:     
\begin{align}\label{eq:recurrence Pab}
P^{a,b}_{2m}(n+1) =  \beta_0(n) P^{a,b}_{2m}(n)+\beta_1(n)P^{a,b}_{2m}(n-1),   
\end{align}
where $c=a+b+m+1/2$ and
\begin{align*}
& \beta_0(n)=\frac{2n^3+3(a+b+c-1)n^2+((a+b)(4c-3)+4ab-c+1)n+2ab(2c-1)}{(2a+n) (2b+n) (a+b+n)}, \\
& \beta_1(n)=-\frac{n(c+n-1)(2c-2+n)}{(2a+n) (2b+n) (a+b+n) }.
\end{align*}
\end{remark}

\textbf{Example~1}.
For $m=1$ we recover Askey's formula \eqref{eq:Askey3.7} found in \cite[eq. (3.7)]{Askey_1989}. Indeed, formula \eqref{Generalized Clausen Identity} assumes the form
\begin{align*}
\left[{}_2F_{1}\left(\begin{array}{c}
 a,b\\ a+b+\frac{3}{2}
\end{array} \middle| x\right) \right]^2 = {}_{5}F_{4}\left(\begin{array}{c}
 2a,2b, a+b, 1+\zeta_1, 1+\zeta_2\\ a+b+\frac{3}{2}, 2a+2b+2, \zeta_1, \zeta_2
\end{array} \middle| x\right),
\end{align*}
 where $\zeta_1$ and $\zeta_2$ are the zeros of the characteristic polynomial obtained by putting $m=1$ in \eqref{eq:Pab}.  A simple calculation yields
\begin{equation*}
P_{2}^{a,b}(-t)=\frac{t^2-(4a+4b+1+8ab)t+2(a+b)(2a+1)(2b+1)}{2(a+b)(2a+1)(2b+1)}.
\end{equation*}
Up to normalization, this is precisely the characteristic polynomial from  \eqref{eq:Askey3.7} obtained in \cite[eq. (3.8)]{Askey_1989}.

\medskip

\textbf{Example~2}.
For $m=2$ we get
\begin{align*}
\left[{}_2F_{1}\left(\begin{array}{c}
 a,b\\ a+b+\frac{5}{2}
\end{array} \middle| x\right) \right]^2 = {}_{7}F_{6}\left(\begin{array}{c}
 2a,2b, a+b, 1+\zetta\\ a+b+\frac{5}{2}, 2a+2b+4, \zetta
\end{array} \middle| x\right),
\end{align*}
where $\zetta=(\zeta_1,\zeta_2,\zeta_3,\zeta_4)$ are negated roots of the characteristic polynomial obtained by putting $m=2$ in \eqref{eq:Pab}. Namely,
\begin{multline*}
 64(a+1/2)_{2}(b+1/2)_{2}(a+b)_{2}P_{4}^{a,b}(t)\!
 =3t^4 +2(8ab+12(a+b)+9)t^3
 \\
 +
\big(64a^2b^2+4(a+b)(36ab+27)+72(a^2+b^2)+288ab+33\big)t^2 
\\
+\big((a+b)(64a^2b^2+360ab+66)
+4(a^2+b^2)(32ab+27)+48(a^3+b^3)+288a^2b^2+26ab+9\big)2t
\\
+64(a+1/2)_{2}(b+1/2)_{2}(a+b)_{2}.
\end{multline*}

\section{Polynomial perturbation of the extended Askey's formula}

We will use the standard notation $S(n,k)$ for  Stirling's numbers of the second kind generated by
$$
x^n=\sum\limits_{k=0}^{n}S(n,k)[x]_{k},~~\text{where}~[x]_{k}=x(x-1)\cdots(x-k+1),
$$
and computed by means of the relation
$$
S(n,k)=\frac{1}{k!}\sum\limits_{i=0}^{k}(-1)^{k-i}\binom{k}{i}i^n.
$$
The main goal of this section is to prove the following theorem.  
\begin{theorem}\label{th:extAskeyPerturbed}
Suppose $s\le2m+1$ and  $F_s(y)=\sum_{n=0}^{s}\sigma_{n}y^n$. Then the following product formula holds:
\begin{multline}\label{eq:extAskeyPerturbed}
{}_{2}F_{1}\!\left(\!\!\begin{array}{l}a,b\\a+b+m+\frac{1}{2}\end{array}\vline\:x\!\right)F\!\left(\!\!\begin{array}{l}a,b\\a+b+m+\frac{1}{2}\end{array} \!\middle| \,F_s\,\middle|\,x\!\right)
\\
=F\!\left(\!\!\begin{array}{c}
 2a,2b, a+b\\ a+b+m+1/2, 2a+2b+2m
\end{array} \!\middle| \,\hat{P}_{2m+s}\,\middle|\,x\right),
\end{multline}
where the characteristic polynomial $\hat{P}_{2m+s}$ of degree $2m+s$ is given by any of the following two expressions
\begin{multline}\label{eq:hatP-interpolation}
\hat{P}_{2m+s}(t)=\frac{(-1)^s}{(2m+s)!}\sum\limits_{k=0}^{2m+s}\binom{2m+s}{k}(-t)_k(t-2m-s)_{2m+s-k}
\\
\times\frac{(a)_{k} (b)_{k}(2a+2b+2m)_{k}}{(2a)_{k} (2b)_{k}(a+b)_{k}}F\!\left(\!\!\begin{array}{c} -k, a,b ,\frac{1}{2}-k-a-b-m \\ a+b+m+\frac{1}{2}, 1-a-k,1-b-k\end{array}\middle|\,F_s \,\right)
\end{multline}
and
\begin{multline}\label{eq:hatP}
\hat{P}_{2m+s}(t)=\sigma_0P_{2m}^{a,b}(t)
+\frac{1}{2}\sum_{n=1}^{s}\sigma_n\sum_{k=1}^{n}(-1)^{k}S(n,k)\sum\limits_{j=0}^{\lfloor k/2\rfloor}\frac{k}{k-j}\binom{k-j}{j}
\\
\times\frac{(a)_{j}(b)_{j}(a+b+m)_{j}(1/2-a-b-m-t)_{j}}{4^{j}(a+1/2)_{j}(b+1/2)_{j}(a+b)_{2j}}
(-t)_{k}P_{2m-2j}^{a+j,b+j}(t-2j),
\end{multline}
with $P_{2q}^{\alpha,\beta}$ defined in \eqref{eq:Pab}.
\end{theorem}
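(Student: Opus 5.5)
The plan is to use the operator representation \eqref{eq:xDform}. Write $f(x)={}_2F_1(a,b;c;x)$ with $c=a+b+m+\frac12$. Then
$$F(a,b;c\,|\,F_s\,|\,x)=F_s(x\partial_x)f=\sum_{n=0}^s\sigma_n (x\partial_x)^n f .$$
Expanding $(x\partial_x)^n=\sum_{k=0}^n S(n,k)x^k\partial_x^k$ (the standard Stirling-number identity matching the definition above) reduces the left-hand side of \eqref{eq:extAskeyPerturbed} to
$$\sigma_0 f^2+\sum_{n\ge1}\sigma_n\sum_{k\ge1}S(n,k)\,x^k f f^{(k)}.$$
The $\sigma_0$ term is exactly Theorem~\ref{thrm:extended askey}. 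So everything rests on writing $x^k f f^{(k)}$ as a polynomial perturbation of the same ${}_3F_2$.

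For this I would use the Lucas/Chebyshev-type identity
$$2ff^{(k)}=\sum_{j=0}^{\lfloor k/2\rfloor}(-1)^j\frac{k}{k-j}\binom{k-j}{j}\,\partial_x^{k-2j}\big[(f^{(j)})^2\big],$$
proved by induction on $k$ via Leibniz's rule. For example, $k=2$ gives $2ff''=(f^2)''-2(f')^2$; the coefficients obey the Lucas recurrence. Next, $f^{(j)}=\frac{(a)_j(b)_j}{(c)_j}\,{}_2F_1(a+j,b+j;c+j;x)$, and
$$c+j=(a+j)+(b+j)+(m-j)+\tfrac12.$$
Since $j\le\lfloor k/2\rfloor\le\lfloor (2m+1)/2\rfloor=m$, this is where the hypothesis $s\le 2m+1$ enters: Theorem~\ref{thrm:extended askey} applies with $(a,b,m)\to(a+j,b+j,m-j)$. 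It gives $(f^{(j)})^2$ as the series $F(2a+2j,2b+2j,a+b+2j;\,c+j,2a+2b+2m+2j\,|\,P_{2m-2j}^{a+j,b+j}\,|\,x)$.

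Then I apply $x^k\partial_x^{k-2j}$ termwise and read off the coefficient of $x^N$. The derivative shifts the index by $2j$, so after the shift the perturbing polynomial is evaluated at $t-2j$. The falling factorials produce $(-1)^k(-t)_k$ up to the $j$-dependent factor. The Pochhammer ratios $(2a+2j)_{N-2j}/(2a)_N$ and their analogues combine, via the duplication identity $(2\alpha)_{2j}=4^j(\alpha)_j(\alpha+\frac12)_j$, into the rational factor displayed in \eqref{eq:hatP}. The factor $(1/2-a-b-m-t)_j$ comes from $(c)_{N}/(c+j)_{N-2j}$ rewritten by reflection. Summing over $n,k,j$ yields \eqref{eq:hatP}.

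The degree is read off from the same formula: the $n=s$, $k=s$, $j$-terms have degree $s+2j+(2m-2j)=2m+s$. A leading-coefficient check (or the nonvanishing of the $j=0$ contribution $(-t)_sP_{2m}^{a,b}(t)$, which dominates generically) confirms that the degree is exactly $2m+s$. For \eqref{eq:hatP-interpolation}, I would instead take the Cauchy product of the two series directly, as in the proof of Theorem~\ref{thrm:extended askey}. The coefficient of $x^k$ is $\frac{(a)_k(b)_k}{(c)_kk!}$ times the $F_s$-perturbed ${}_4F_3$ in \eqref{eq:hatP-interpolation}, with $F_s$ evaluated at the summation index of the second factor. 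Dividing by the prefactor of the right-hand side gives $\hat P_{2m+s}(k)$ at $k=0,\dots,2m+s$. Since $\hat P$ is already known to be a polynomial of degree $2m+s$, Lagrange interpolation \eqref{eq:LagrangeP} gives \eqref{eq:hatP-interpolation}.

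I expect the main obstacle to be the bookkeeping in the coefficient extraction, namely getting the exact rational factor with the $4^{j}$, $(a+b)_{2j}$ and the $t$-dependent Pochhammer. I would check it against small cases such as $k=1,2$ with $m=0$ and Clausen's identity. Proving the Lucas-type derivative identity and matching its sign conventions with the $(-1)^k$ from Stirling is a secondary obstacle.
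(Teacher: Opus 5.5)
Your proposal follows the paper's proof essentially step for step: the Stirling expansion of $(x\partial_x)^n$, the Chebyshev/Lucas identity \eqref{eq:-theta-f-theta} for $2ff^{(k)}$, Theorem~\ref{thrm:extended askey} applied to $(f^{(j)})^2$ with $(a,b,m)\to(a+j,b+j,m-j)$, termwise coefficient extraction, and Lagrange interpolation of the Cauchy-product coefficients. The one difference is that you prove the Lucas identity by induction on $k$, which works via the recurrence $c_{k+1,j}=c_{k,j}-c_{k-1,j-1}$ coming from $2ff^{(k+1)}=\partial_x(2ff^{(k)})-2f'f^{(k)}$, rather than through the orthogonality relation \eqref{eq:binomialidentity}. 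There is also one small slip in your degree count: $(1/2-a-b-m-t)_j$ has degree $j$, not $2j$, so the $(k,j)$-term has degree $2m+k-j$, and the $n=k=s$, $j=0$ term is the unique top-degree contribution, with leading coefficient $\tfrac12\sigma_s$ times that of $P_{2m}^{a,b}$, so no genericity hedge is needed.
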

\begin{remark}
By exchanging the order of summation in   \eqref{eq:hatP-interpolation} we can also write  $\hat{P}_{2m+s}$  in the form
\begin{multline*}
\hat{P}_{2m+s}(t)=\frac{(1-t)_{2m+s}}{(2m+s)!}\sum\limits_{j=0}^{2m+s}\frac{(-2m-s)_{j}(a)_{j}(b)_{j}(2a+2b+2m)_{j}(-t)_{j}F_s(j)}{(2a)_{j}(2b)_{j}(a+b)_{j}j!(1-t)_{j}}
\\
\times
{}_{6}F_{5}\!\left(\!\!\begin{array}{l}-2m-s+j,a,b,a+b+m+j+1/2,2a+2b+2m+j,-t+j\\a2+j,2b+j,a+b+j,a+b+m+1/2,1-t+j\end{array}\!\right).
\end{multline*}
This form makes dependence on $F_s$ more explicit at the price of making the dependence on $t$ more complicated.
\end{remark}

The proof of this theorem will rely on three lemmas. The first is straightforward.
\begin{lemma}
For each integer $n\ge0$, we have
\begin{equation}\label{eq:DFperturbed}
  \frac{d^n}{dx^n}F\left(\!\!\begin{array}{c}
 \mathbf{a}\\ \mathbf{b}
\end{array} \middle| \,P \,\middle|\,x\right)
=\frac{(\mathbf{a})_{n}}{(\mathbf{b})_{n}}
F\left(\!\!\begin{array}{c}
 \mathbf{a}+n\\ \mathbf{b}+n
\end{array} \middle| \,S_nP \,\middle|\,x\right),
\end{equation}
where $S_n$ is the shift operator, $S_nP(x)=P(x+n)$.
\end{lemma}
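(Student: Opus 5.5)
The plan is to differentiate the series term by term, so the whole argument works at the level of power series coefficients. By definition,
\[
F\!\left(\begin{array}{c}\a\\\b\end{array}\middle|\,P\,\middle|\,x\right)=\sum_{k\ge0}\frac{(\a)_k}{(\b)_k k!}P(k)x^k .
\]
Applying $\frac{d^n}{dx^n}$ kills the terms with $k<n$ and sends $x^k$ to $\frac{k!}{(k-n)!}x^{k-n}$. So the $n$-th derivative equals
\[
\sum_{k\ge n}\frac{(\a)_k}{(\b)_k (k-n)!}P(k)x^{k-n}.
\]
Differentiating term by term is justified inside the disc of convergence. For a formal power series it holds by definition.

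Next I will reindex with $k=\ell+n$, $\ell\ge0$. The factor $1/(k-n)!$ becomes $1/\ell!$, and $P(k)$ becomes $P(\ell+n)=(S_nP)(\ell)$.

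The Pochhammer symbols are split using $(x)_{\ell+n}=(x)_n(x+n)_\ell$, applied componentwise to the tuples $\a$ and $\b$. This gives
\[
\frac{(\a)_{\ell+n}}{(\b)_{\ell+n}}=\frac{(\a)_n}{(\b)_n}\cdot\frac{(\a+n)_\ell}{(\b+n)_\ell}.
\]
Pulling the constant factor $(\a)_n/(\b)_n$ out of the sum leaves
\[
\sum_{\ell\ge0}\frac{(\a+n)_\ell}{(\b+n)_\ell\,\ell!}(S_nP)(\ell)x^\ell .
\]
By the definition of the perturbed series this is exactly $F\!\left(\begin{array}{c}\a+n\\\b+n\end{array}\middle|\,S_nP\,\middle|\,x\right)$, which proves \eqref{eq:DFperturbed}.

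There is no real obstacle here; the only care needed is bookkeeping of the index shift. The standing assumption that no bottom parameter is a non-positive integer keeps $(\b)_n$ nonzero, so the prefactor is well defined. The case $n=0$ is trivial.
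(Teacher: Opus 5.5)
Your proof is correct, and it is exactly the routine computation the paper has in mind when it calls this lemma straightforward (no written proof is given there). You differentiate term by term, reindex with $k=\ell+n$, and split $(\mathbf{a})_{\ell+n}=(\mathbf{a})_n(\mathbf{a}+n)_\ell$ to identify $P(\ell+n)=(S_nP)(\ell)$ in the shifted series.
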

Next, we need a kind of orthogonality relation given by
\begin{lemma}
For each integer $k\ge1$, the following identity holds
\begin{equation}\label{eq:binomialidentity}
\sum\limits_{j=0}^{\lfloor k/2\rfloor}\frac{(-1)^{j}k}{k-j}\binom{k-j}{j}\binom{k-2j}{\ell-j}=\delta_{\ell,0}+\delta_{\ell,k}=\begin{cases}1,~\ell=0,k\\0, ~\ell=1,2\ldots, k-1\end{cases}.
\end{equation}
\end{lemma}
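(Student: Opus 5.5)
The plan is to pass to a generating function in $\ell$ and recognize the resulting sum as a Waring (Lucas) formula for a power sum. Multiply the left-hand side of \eqref{eq:binomialidentity} by $y^\ell$ and sum over all $\ell$. Since $\sum_{\ell}\binom{k-2j}{\ell-j}y^{\ell}=y^{j}(1+y)^{k-2j}$, it suffices to prove the polynomial identity
\begin{equation*}
\sum_{j=0}^{\lfloor k/2\rfloor}\frac{(-1)^{j}k}{k-j}\binom{k-j}{j}\,y^{j}(1+y)^{k-2j}=1+y^{k},\qquad k\ge1 .
\end{equation*}
Comparing coefficients of $y^\ell$ then gives exactly $\delta_{\ell,0}+\delta_{\ell,k}$. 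Note that $k-j\ge k/2>0$ for $k\ge1$, so every term is well defined.

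This is the case $u=1+y$, $v=y$ of the classical formula
\begin{equation*}
\alpha^{k}+\beta^{k}=\sum_{j=0}^{\lfloor k/2\rfloor}\frac{(-1)^{j}k}{k-j}\binom{k-j}{j}u^{k-2j}v^{j},\qquad \alpha+\beta=u,\ \alpha\beta=v .
\end{equation*}
With $u=1+y$ and $v=y$ we may take $\alpha=1$ and $\beta=y$, so the right-hand side becomes $1+y^{k}$, as required.

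To keep the argument self-contained, I would prove this formula by logarithmic differentiation. As formal power series in $z$,
\begin{equation*}
-\log(1-uz+vz^{2})=-\log(1-\alpha z)-\log(1-\beta z)=\sum_{k\ge1}\frac{\alpha^{k}+\beta^{k}}{k}z^{k}.
\end{equation*}
On the other hand, writing $1-uz+vz^2=1-z(u-vz)$ and expanding,
\begin{equation*}
-\log\bigl(1-z(u-vz)\bigr)=\sum_{n\ge1}\frac{z^{n}(u-vz)^{n}}{n}.
\end{equation*}
The coefficient of $z^k$ comes from $n=k-j$ with $0\le j\le\lfloor k/2\rfloor$, and equals $\frac{1}{k-j}\binom{k-j}{j}u^{k-2j}(-v)^{j}$. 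Multiplying by $k$ and comparing coefficients yields the formula.

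The only mildly delicate point is the bookkeeping in this coefficient extraction, together with the observation that the formula holds identically in $u,v$, so no genericity assumption on $\alpha\neq\beta$ is needed. The rest of the argument is direct substitution.
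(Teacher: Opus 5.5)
Your proof is correct. Its first step, multiplying by $y^\ell$ and summing over $\ell$ to reduce the claim to
$\sum_{j}\frac{(-1)^jk}{k-j}\binom{k-j}{j}y^j(1+y)^{k-2j}=1+y^k$, is exactly what the paper does. The two arguments differ only in how this polynomial identity is established.

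The paper factors out $(1+y)^k$ and quotes from Rivlin the expansion $2T_k(Y)=\sum_j\frac{(-1)^jk}{k-j}\binom{k-j}{j}(2Y)^{k-2j}$ of the Chebyshev polynomial. It then substitutes $Y=(1+y)/(2\sqrt{y})$ and uses $T_k\bigl(\tfrac{t+t^{-1}}{2}\bigr)=\tfrac{t^k+t^{-k}}{2}$ with $t=\sqrt{y}$.

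Your Waring formula for $\alpha^k+\beta^k$ in terms of $u=\alpha+\beta$, $v=\alpha\beta$ is the homogeneous form of that same Chebyshev expansion; the paper's formula is the special case $v=1$. So the underlying fact is the same. Keeping $v$ general has two advantages:
\begin{itemize}
\item You can take $\alpha=1$, $\beta=y$ directly. This avoids square roots and the implicit passage through $y>0$ followed by polynomial continuation.
\item Your formal-power-series derivation of the coefficient of $z^k$ in $-\log(1-uz+vz^2)$ makes the argument self-contained rather than dependent on a cited formula. The bookkeeping is correct: $n=k-j$, $0\le j\le\lfloor k/2\rfloor$, and $n\ge1$ holds automatically.
\end{itemize}
In exchange, the paper's version is shorter on the page and ties the identity to a familiar classical object. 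One small point: what you describe as logarithmic differentiation is really just taking formal logarithms, since no derivative is ever used.
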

\begin{proof}
 Let
$$
H_{\ell,k}:=\sum_{j=0}^{\lfloor k/2\rfloor}\frac{(-1)^{j}k}{k-j}\binom{k-j}{j}\binom{k-2j}{\ell-j}.
$$
The required relation is then equivalent to
\begin{equation}\label{eq:generatig_form}
H(x):=\sum_{\ell=0}^{k} H_{\ell,k} x^\ell=1+x^k. 
\end{equation}
In view of the binomial formula and the fact that $\binom{m}{n}=0$ for $n\notin\{0,\ldots,m\}$ we have
$$
\sum_{\ell=0}^{k}\binom{k-2j}{\ell-j}x^{\ell}=x^{j}(1+x)^{k-2j}. 
$$
Exchanging the order of summation in the definition of $H(x)$ then yields
$$
H(x)=\sum_{j=0}^{\lfloor k/2\rfloor}\frac{(-1)^{j}k}{k-j}\binom{k-j}{j}x^j(1+x)^{k-2j}
=(1+x)^k\sum_{j=0}^{\lfloor k/2\rfloor}\frac{(-1)^{j}k}{k-j}\binom{k-j}{j}\bigg[\frac{x}{(1+x)^2}\bigg]^j.
$$
In order to establish \eqref{eq:generatig_form}, we will show that 
\begin{equation}\label{eq:S-Tk}
H(x)=2x^{k/2}T_k\Big(\frac{1+x}{2\sqrt{x}}\Big),
\end{equation}
where $T_k$ stands for the Chebyshev polynomial of the first kind. 
Indeed, one of the known expressions for $T_k$ is \cite[(1.108)]{Rivlin1990}
$$
T_{k}(y)=\frac{k}{2}\sum_{j=0}^{\lfloor k/2\rfloor}(-1)^{j} \frac{(k-j-1)!}{j!(k-2j)!}(2y)^{k-2j}=\frac{1}{2}(2y)^{k}\sum_{j=0}^{\lfloor k/2\rfloor} \frac{(-1)^{j}k}{k-j}\binom{k-j}{j}\bigg(\frac{1}{4y^2}\bigg)^{j}.
$$
Setting
$$
\frac{x}{(1+x)^2}=\frac{1}{4y^2}~\Leftrightarrow~y=\frac{1+x}{2\sqrt{x}}~\Leftrightarrow~(2y)^{k}=\frac{(1+x)^k}{x^{k/2}},
$$
we arrive at \eqref{eq:S-Tk}. From the definition $T_{k}(\cos(\theta))=\cos(k\theta)$, it follows that
$$
T_k\bigg(\frac{t+t^{-1}}{2}\bigg)=\frac{t^k+t^{-k}}{2},
$$
so that putting $t=\sqrt{x}$ implies
$$
\frac{t+t^{-1}}{2}=\frac{\sqrt{x}+1/\sqrt{x}}{2}=\frac{1+x}{2\sqrt{x}}.
$$ 
Hence, we get
$$
T_k\Big(\frac{1+x}{2\sqrt{x}}\Big)=\frac{x^{k/2}+x^{-k/2}}{2}.
$$
Substituting this into \eqref{eq:S-Tk}, we arrive at \eqref{eq:generatig_form}, thus also proving \eqref{eq:binomialidentity}.  
\end{proof}

The key ingredient of the proof of Theorem~ \ref{th:extAskeyPerturbed} is the following
\begin{lemma}
For a smooth function $f$ and any integer $n\ge1$, we have
\begin{equation}\label{eq:f-theta-f}
f(x)(x\partial_{x})^{n}f(x)= \frac{1}{2}\sum_{k=0}^{n}S(n,k)x^k\sum\limits_{j=0}^{\lfloor k/2\rfloor}\frac{(-1)^{j}k}{k-j}\binom{k-j}{j}\partial_x^{k-2j}[f^{(j)}(x)]^2.
\end{equation}
\end{lemma}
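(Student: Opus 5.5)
We reduce the identity to the case of a single derivative power and then use the orthogonality relation \eqref{eq:binomialidentity}.

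\textbf{Step 1: reduction to pure derivatives.} Expand
$$
(x\partial_x)^n=\sum_{k=0}^{n}S(n,k)x^k\partial_x^k,
$$
which is the standard operator identity; it follows by induction on $n$ from the Stirling recurrence $S(n+1,k)=kS(n,k)+S(n,k-1)$. Then $f\,(x\partial_x)^n f=\sum_k S(n,k)x^k f f^{(k)}$. The term $k=0$ needs separate handling: $S(n,0)=0$ for $n\ge1$, so it contributes nothing on either side. For the right-hand side, with $k=0$ the factor $k/(k-j)$ at $j=0$ is naturally interpreted as $1$, and this term is again killed by $S(n,0)=0$. Hence it suffices to prove, for each $k\ge1$,
\begin{equation*}
2f f^{(k)}=\sum_{j=0}^{\lfloor k/2\rfloor}\frac{(-1)^{j}k}{k-j}\binom{k-j}{j}\partial_x^{k-2j}\big[(f^{(j)})^2\big].
\end{equation*}

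\textbf{Step 2: expand by Leibniz and apply \eqref{eq:binomialidentity}.} By the Leibniz rule,
$$
\partial_x^{k-2j}\big[(f^{(j)})^2\big]=\sum_{i=0}^{k-2j}\binom{k-2j}{i}f^{(j+i)}f^{(k-j-i)}.
$$
Put $\ell=j+i$, so the product is $f^{(\ell)}f^{(k-\ell)}$ with $\ell$ running over $j,\dots,k-j$. The binomial $\binom{k-2j}{\ell-j}$ vanishes outside this range, so $\ell$ may be extended to $0,\dots,k$. Exchanging the order of summation, the right-hand side becomes
$$
\sum_{\ell=0}^{k}f^{(\ell)}f^{(k-\ell)}\,H_{\ell,k},
$$
where $H_{\ell,k}$ is exactly the sum in \eqref{eq:binomialidentity}. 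That lemma gives $H_{\ell,k}=\delta_{\ell,0}+\delta_{\ell,k}$, so the sum collapses to $f f^{(k)}+f^{(k)}f=2ff^{(k)}$. This is the required identity; multiplying by $\tfrac12 S(n,k)x^k$ and summing over $k$ gives \eqref{eq:f-theta-f}.

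\textbf{Main point of care.} The only delicate step is the reindexing in Step 2: we must check that the range $0\le i\le k-2j$ corresponds exactly to the support of $\binom{k-2j}{\ell-j}$, so that the exchange of sums is legitimate. The nondegeneracy $k\ge1$ is what makes the prefactor $k/(k-j)$ well defined and lets Lemma \eqref{eq:binomialidentity} apply. Everything else is routine.
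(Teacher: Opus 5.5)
Your proof is correct and follows the paper's own argument. The paper also reduces the claim via $(x\partial_x)^n=\sum_k S(n,k)x^k\partial_x^k$ to the identity $2ff^{(k)}=\sum_j \frac{(-1)^j k}{k-j}\binom{k-j}{j}\partial_x^{k-2j}[(f^{(j)})^2]$, and proves that by Leibniz's rule, the reindexing $\ell=j+r$ with the range extended through vanishing binomials, and the orthogonality relation \eqref{eq:binomialidentity}. Your explicit handling of the $k=0$ term via $S(n,0)=0$ is a small piece of care that the paper leaves implicit.
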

\begin{proof} First, we will prove the formula 
\begin{equation}\label{eq:-theta-f-theta}
2f(x)\partial_x^kf(x)=\sum\limits_{j=0}^{\lfloor k/2\rfloor}\frac{(-1)^{j}k}{k-j}\binom{k-j}{j}\partial_x^{k-2j}[(\partial_x^{j}f(x)]^2.
\end{equation}
Treating $[(\partial_x^{j}f(x)]^2$ as product, by Leibniz rule we have:
\begin{multline*}
\partial_x^{k-2j}[(\partial_x^{j}f(x)]^2=
\sum_{r=0}^{k-2j}\binom{k-2j}{r}\partial_x^{r+j}f(x)\partial_x^{k-j-r}f(x)
\\
=\sum_{\ell=j}^{k-j}\binom{k-2j}{\ell-j}\partial_x^{\ell}f(x)\partial_x^{k-\ell}f(x)= \sum_{\ell=0}^{k}\binom{k-2j}{\ell-j}\partial_x^{\ell}f(x)\partial_x^{k-\ell}f(x),
\end{multline*}
where we put $\ell=j+r$ for the second equality, and for the third equality we used the fact that $\binom{m}{n}=0$ when $n<0$ or $n>m$. Substituting this into the right-hand side of \eqref{eq:-theta-f-theta} and changing the order of summations yields by \eqref{eq:binomialidentity}
$$
\sum_{\ell=0}^{k}\partial_x^{\ell}f(x)\partial_x^{k-\ell}f(x)\sum\limits_{j=0}^{\lfloor k/2\rfloor}\frac{(-1)^{j}k}{k-j}\binom{k-j}{j}\binom{k-2j}{\ell-j}=2f(x)\partial_x^kf(x)
$$
which is \eqref{eq:-theta-f-theta}.  

We are now ready to prove \eqref{eq:f-theta-f}.     To this end, we need the well-known expansion 
$$
(x\partial_{x})^{n}=\sum_{k=0}^{n}S(n,k)x^k\partial_{x}^{k}.
$$
Applying it to $f$, multiplying the result by $f$, and using \eqref{eq:-theta-f-theta}, we arrive at \eqref{eq:f-theta-f}.
\end{proof}

To illustrate \eqref{eq:f-theta-f}, below are some simple particular cases.

\medskip

\textbf{Examples.} Note that $S(0,0)=1$, $S(n,0)=0$ for $n\ge1$. Then we have 
$$
f(x)(x\partial_{x})f(x)=\frac{1}{2}x\partial_{x}[f(x)]^2,
$$
$$
f(x)(x\partial_{x})^2f(x)=\frac{1}{2}x\partial_{x}[f(x)]^2+\frac{1}{2}x^2\partial_{x}^2[f(x)]^2-x^2[f'(x)]^2,
$$
$$
f(x)(x\partial_{x})^3f(x)=\frac{1}{2}x\partial_{x}[f(x)]^2+\frac{3}{2}x^2\partial_{x}^2[f(x)]^2-3x^2[f'(x)]^2+\frac{1}{2}x^3\partial_{x}^3[f(x)]^2-\frac{3}{2}x^3\partial_{x}[f'(x)]^2.
$$

\medskip

\textit{Proof of Theorem~\ref{th:extAskeyPerturbed}}.
Write 
\begin{equation*}
    f(x)={}_2F_{1}\left(\begin{array}{c}
 a,b\\ a+b+m+\frac{1}{2}
\end{array} \middle| x\right),
\end{equation*}
so that 
$$
    f^{(j)}(x)=\frac{(a)_{j}(b)_{j}}{(a+b+m+1/2)_{j}} {}_2F_{1}\left(\begin{array}{c}
 a+j,b+j\\ a+b+m+\frac{1}{2}+j
\end{array} \middle| x\right).
$$
Hence, by \eqref{eq:extendedAskey} with $a\to  a+j$, $b\to b+j$, $m\to m-j$, we obtain
\begin{multline}\label{eq:fjsquare}
[f^{(j)}(x)]^2=\frac{(a)_{j}^2(b)_{j}^2}{(a+b+m+1/2)_{j}^2}
\\\times F\left(\begin{array}{c}
 2a+2j,2b+2j, a+b+2j\\ a+b+m+j+\frac{1}{2}, 2a+2b+2m+2j
\end{array}  \middle| \,P_{2m-2j}^{a+j,b+j} \,\middle|\,x\right).
\end{multline}
Denote
$$
\alpha=\frac{(a)_{j}^2(b)_{j}^2}{(a+b+m+1/2)_{j}^2}.
$$
Using this notation, \eqref{eq:fjsquare} implies by employing \eqref{eq:DFperturbed}, that
\begin{multline*}
    \partial_{x}^{k-2j}[f^{(j)}(x)]^2={\alpha}\partial_{x}^{k-2j}
 F\!\left(\!\!\begin{array}{c}
 2a+2j,2b+2j, a+b+2j\\ a+b+m+j+\frac{1}{2}, 2a+2b+2m+2j
\end{array}  \middle| \,P_{2m-2j}^{a+j,b+j} \,\middle|\,x\right)
\\
=\alpha\frac{(2a+2j)_{k-2j}(2b+2j)_{k-2j}(a+b+2j)_{k-2j}}{(a+b+m+j+1/2)_{k-2j}( 2a+2b+2m+2j)_{k-2j}}
\\
 \times F\!\left(\!\!\begin{array}{c}
 2a+k,2b+k, a+b+k\\ a+b+m+k-j+1/2, 2a+2b+2m+k
\end{array}  \middle| \,S_{k-2j}P_{2m-2j}^{a+j,b+j}\,\middle|\,x\right).
\end{multline*}
Simplifying the Pochhammer symbols and denoting 
$$
Q_{k,j}(t)=S_{k-2j}P_{2m-2j}^{a+j,b+j}(t)=P_{2m-2j}^{a+j,b+j}(t+k-2j),
$$
which is a polynomial of degree $2m-2j$, we obtain
\begin{equation}\label{eq:Dfjsquared}
\partial_{x}^{k-2j}[f^{(j)}(x)]^2=
    A_{j}B_{k}C_{k,j}
     F\!\left(\!\!\begin{array}{c}
 2a+k,2b+k, a+b+k\\ a+b+m+1/2+k-j, 2a+2b+2m+k
\end{array} \!\middle| \,Q_{k,j}\,\middle|\,x\right), 
\end{equation}
where
$$
A_j\!=\!\frac{4^{-j}(a)_{j}(b)_{j}(a+b+m)_{j}}{(a+1/2)_{j}(b+1/2)_{j}(a+b)_{2j}},~B_k\!=\!\frac{(2a)_{k}(2b)_{k}(a+b)_{k}}{(2a+2b+2m)_{k}},~~~C_{k,j}\!=\!\frac{1}{(a+b+m+1/2)_{k-j}}.
$$
Next, we have 
$$
F\!\left(\!\!\begin{array}{l}a,b\\a+b+m+\frac{1}{2}\end{array} \!\middle| \,F_s\,\middle|\,x\!\right)
=\sum_{n=0}^{s}\sigma_n (x\partial_{x})^{n} \bigg[{_{2}F_{1}}\!\left(\!\!\begin{array}{l}a,b\\c\end{array}\vline\:x\!\right)\bigg],
$$
where $\sigma_n$ are the coefficients of $F_s(y)$, $F_s(y)=\sum_{n=0}^{s}\sigma_{n}y^n$. Multiplying this formula by ${}_2F_1(a,b;c;x)$ with $c=a+b+m+1/2$ replacing each product by expansion \eqref{eq:f-theta-f} and using \eqref{eq:Dfjsquared} we get 
\begin{multline*}
{_{2}F_{1}}\!\left(\!\!\begin{array}{l}a,b\\c\end{array}\vline\:x\!\right)F\!\left(\!\!\begin{array}{l}a,b\\a+b+m+\frac{1}{2}\end{array} \!\middle| \,F_s\,\middle|\,x\!\right)=
\\
=\bigg[{_{2}F_{1}}\!\left(\!\!\begin{array}{l}a,b\\c\end{array}\vline\:x\!\right)\bigg]^2+\sum_{n=1}^{s}\sigma_{n}\cdot {}_{2}F_{1}\!\left(\!\!\begin{array}{l}a,b\\c\end{array}\vline\:x\!\right)(x\partial_{x})^{n} \bigg[{_{2}F_{1}}\!\left(\!\!\begin{array}{l}a,b\\c\end{array}\vline\:x\!\right)\bigg]
\\
= F\!\left(\!\!\begin{array}{c}
 2a,2b, a+b\\ a+b+m+1/2, 2a+2b+2m
\end{array} \!\middle| \,P_{2m}^{a,b}\,\middle|\,x\right)+\frac{1}{2}\sum_{n=1}^{s}\sigma_n\sum_{k=1}^{n}S(n,k)B_kx^k
\\
\times\sum\limits_{j=0}^{\lfloor k/2\rfloor}\frac{(-1)^{j}k}{k-j}\binom{k-j}{j}A_{j}C_{k,j}F\!\left(\!\!\begin{array}{c}
 2a+k,2b+k, a+b+k\\ a+b+m+1/2+k-j, 2a+2b+2m+k
\end{array} \!\middle| \,Q_{k,j}\,\middle|\,x\right).
\end{multline*}
Simplifying we obtain 
\begin{multline*}
x^kB_kC_{k,j}F\!\left(\!\!\begin{array}{c}
 2a+k,2b+k, a+b+k\\ a+b+m+1/2+k-j, 2a+2b+2m+k
\end{array} \!\middle| \,Q_{k,j}\,\middle|\,x\right)
\\
=\sum\limits_{l=0}^{\infty}\frac{(2a)_{l}(2b)_l(a+b)_{l}x^{l}(l-k+1)_{k}(a+b+m+1/2+l-j)_{j}}{(a+b+m+1/2)_{l}(2a+2b+2m)_{l}l!}Q_{k,j}(l-k).
\end{multline*}
Note that we extended the range of summation from $l\ge{k}$ to $l\ge0$ in view of the fact that $(l-k+1)_{k}=0$ for $l=0,1,\ldots,k-1$. Substituting this into the above product formula, we arrive at 
$$
{_{2}F_{1}}\!\left(\!\!\begin{array}{l}a,b\\c\end{array}\vline\:x\!\right)F\!\left(\!\!\begin{array}{l}a,b\\a+b+m+\frac{1}{2}\end{array} \!\middle| \,F_s\,\middle|\,x\!\right)=
F\!\left(\!\!\begin{array}{c}
 2a,2b, a+b\\ a+b+m+1/2, 2a+2b+2m
\end{array} \!\middle| \,\hat{P}_{2m+s}\,\middle|\,x\right),
$$
where the characteristic polynomial is given by
\begin{multline*}
\hat{P}_{2m+s}(t)=P_{2m}^{a,b}(t)+\frac{1}{2}\sum_{n=1}^{s}\sigma_n\sum_{k=1}^{n}S(n,k)
\\
\times\sum\limits_{j=0}^{\lfloor k/2\rfloor}\frac{(-1)^{j}k}{k-j}\binom{k-j}{j}A_{j}
(t-k+1)_{k}(a+b+m+1/2+t-j)_{j}Q_{k,j}(t-k)
\\
=P_{2m}^{a,b}(t)+\frac{1}{2}\sum_{n=1}^{s}\sigma_n
\sum_{k=1}^{n}(-1)^{k}S(n,k)\sum\limits_{j=0}^{\lfloor k/2\rfloor}\frac{k}{k-j}\binom{k-j}{j}
\\
\times\frac{(a)_{j}(b)_{j}(a+b+m)_{j}(1/2-a-b-m-t)_{j}}{4^{j}(a+1/2)_{j}(b+1/2)_{j}(a+b)_{2j}}
(-t)_{k}P_{2m-2j}^{a+j,b+j}(t-2j).
\end{multline*}
This shows that $\hat{P}_{2m+s}(t)$ is indeed a polynomial of degree $2m+s$ given by \eqref{eq:hatP}. 

To establish \eqref{eq:hatP-interpolation}, we will apply the interpolation polynomial as given in \eqref{eq:LagrangeP}.
To this end we  can write the coefficients of the  product \eqref{eq:extAskeyPerturbed} in the form:
\begin{multline*}
F\left(\begin{array}{c}
 a,b\\ a+b+m+\frac{1}{2}
\end{array} \middle|\,F_s\middle|\, x\right) {}_2F_{1}\left(\begin{array}{c}
 a,b\\ a+b+m+\frac{1}{2}
\end{array} \middle|\,x\right) \\
= \sum_{l=0}^{\infty} \frac{(a)_{l}(b)_{l}x^{l}}{(a+b+m+\frac{1}{2})_{l}l!} F\!\left(\!\! \begin{array}{c} -l, a,b ,\frac{1}{2}-l-a-b-m \\ a+b+m+\frac{1}{2}, 1-a-l,1-b-l\end{array} \middle|\,F_s\right),
\end{multline*}
so that 
$$
 \hat{P}_{2m+s}(l)=\frac{(a)_l (b)_l(2a+2b+2m)_{l}}{(2a)_{l} (2b)_{l}(a+b)_{l}}
F\!\left(\!\! \begin{array}{c} -l, a,b ,\frac{1}{2}-l-a-b-m \\ a+b+m+\frac{1}{2}, 1-a-l,1-b-l\end{array} \middle|\,F_s\right),
$$ 
and \eqref{eq:LagrangeP}  immediately yields
\eqref{eq:hatP-interpolation}. $\hfill\square$

\medskip

\begin{remark}
The product formula \eqref{eq:extAskeyPerturbed} implies a summation formula by the standard argument of equating power series coefficients on both sides, namely
\begin{equation}\label{eq:extAskeyPeturbed-summation}
F\left(\begin{array}{c} -k, a,b ,\frac{1}{2}-k-a-b-m \\ a+b+m+\frac{1}{2}, 1-a-k,1-b-k\end{array} \middle| F_s \right) = \frac{(2a)_{k} (2b)_{k} (a+b)_{k} \hat{P}_{2m+s}(k)}{(a)_{k} (b)_{k}(2a+2b+2m)_{k}}.
\end{equation}
\end{remark}

\begin{remark}
The condition $s\le2m+1$ cannot be removed: formula \eqref{eq:extAskeyPerturbed} becomes invalid when it is violated. At the same time, the polynomial given by \eqref{eq:hatP-interpolation} is still well defined and when used in  \eqref{eq:extAskeyPerturbed}  it yields the first $2m+s$ correct power series coefficients by construction, but not more.  It seems plausible to assume that the right hand side of \eqref{eq:extAskeyPerturbed} retains its shape as a polynomial perturbation of the corresponding ${}_3F_2$,
but the perturbing polynomial must then be of degree higher than $2m+s$. This leads to the following

\smallskip

\noindent\textbf{Open problem.} What should replace the right hand side of \eqref{eq:extAskeyPerturbed} when $s>2m+1$? 
\end{remark}

Finally, we present some explicit particular cases of the formula \eqref{eq:extAskeyPerturbed}. 

\textbf{Example~1}. 
For $\mathbf{s}=1$, we get for any integer $m\ge0$:
\begin{multline*}
{}_2F_{1}\left(\begin{array}{c}
 a,b\\ a+b+m+\frac{1}{2}
\end{array} \middle|\,x\right){}_3F_{2}\left(\begin{array}{c}
 a,b,f+1\\ a+b+m+\frac{1}{2},f
\end{array} \middle|\,x\right)
\\
= F\left(\begin{array}{c}
 2a,2b, a+b,2f+1\\ a+b+m+\frac{1}{2}, 2a+2b+2m,2f
\end{array}  \middle| \,P_{2m}^{a,b}\,\middle|\,x\right),
\end{multline*}
where $P_{2m}^{a,b}$ is defined in \eqref{eq:Pab}.
In particular, setting $m=0$ we obtain
\begin{equation*}
{}_2F_{1}\left(\begin{array}{c}
 a,b\\ a+b+\frac{1}{2}
\end{array} \middle|\,x\right){}_3F_{2}\left(\begin{array}{c}
 a,b,f+1\\ a+b+\frac{1}{2},f
\end{array} \middle|\,x\right)
\\
= {}_{4}F_{3}\left(\begin{array}{c}
 2a,2b, a+b,2f+1\\ a+b+\frac{1}{2}, 2a+2b,2f
\end{array}  \middle|\,x\right),
\end{equation*}
which appears to be a new explicit product identity. 

If we set $f=a$ we get a product formula for two Gauss hypergeometric functions ${}_2F_1$ which also appears to be new 
\begin{multline*}
{}_2F_{1}\left(\begin{array}{c}
 a,b\\ a+b+m+\frac{1}{2}
\end{array} \middle|\,x\right){}_2F_{1}\left(\begin{array}{c}
 a+1,b\\ a+b+m+\frac{1}{2}
\end{array} \middle|\,x\right)
\\
= F\left(\begin{array}{c}
  2a+1, 2b, a+b\\a+b+m+\frac{1}{2}, 2a+2b+2m
\end{array}  \middle| \,P^{a,b}_{2m}\,  \middle|\,x\,\right).
\end{multline*}
In particular, for $m=0$ we have:
\begin{equation*}
{}_2F_{1}\left(\begin{array}{c}
 a,b\\ a+b+\frac{1}{2}
\end{array} \middle|\,x\right){}_2F_{1}\left(\begin{array}{c}
 a+1,b\\ a+b+\frac{1}{2}
\end{array} \middle|\,x\right)\!
= {}_3F_{2}\left(\begin{array}{c}
  2a+1, 2b, a+b\\a+b+\frac{1}{2}, 2a+2b
\end{array}\, \middle|\, x\right),
\end{equation*}
which is equivalent to Orr's formula  \cite[(15)]{Grinshpan_2013}. 
By equating coefficients, the above product identity implies the summation formula slightly different from \eqref{eq:extAskeyPeturbed-summation}, which we record for later use: 
\begin{equation}\label{eq:new4F3sum}
{}_4F_{3}\left(\!\!\begin{array}{c}
 -k,a+1,b,\frac{1}{2}-k-a-b-m\\a+b+m+\frac{1}{2}, 1-k-a,1-k-b
\end{array}\!\!\right)\!=\!\frac{(2a+1)_{k}(2b)_{k}(a+b)_{k}P^{a,b}_{2m}(k)}{(a+b+m+1/2)_{k}(2a+2b+2m)_{k}}.
\end{equation}

\medskip

\textbf{Example~2}. For $\mathbf{s}=2$, we get for $m\ge1$
\begin{multline*}
{}_2F_{1}\left(\begin{array}{c}
 a,b\\ a+b+m+\frac{1}{2}
\end{array} \middle| x\right){}_3F_{2}\left(\begin{array}{c}
 a,b,f+2\\ a+b+m+\frac{1}{2},f
\end{array} \middle| x\right)
\\
= F\left(\begin{array}{c}
 2a,2b, a+b\\ a+b+m+\frac{1}{2}, 2a+2b+2m
\end{array}  \middle|  \hat{P}_{2m+2}  \middle|  x\right),
\end{multline*}
where
\begin{multline*}
 \hat{P}_{2m+2}(t)=\Big(1+\frac{(2f+1)}{2f(f+1)}t+\frac{t^2}{2f(f+1)}\Big)P_{2m}^{a,b}(t)
 \\
 - \frac{ab(a+b+m)(a+b+m-1/2+t)t(t-1)}{f(f+1)(2a+1)(2b+1)(a+b)_2}P_{2m-2}^{a+1,b+1}(t-2).
\end{multline*}
In particular, for $m=1$, we can write using \eqref{eq:hatP-interpolation}:
\begin{multline*}
\hat{P}_{4}(t)=\frac{1}{24}\sum\limits_{k=0}^{4}\binom{4}{k}(-t)_k(t-4)_{4-k}
\\
\times\frac{(a)_{k} (b)_{k}(2a+2b+2)_{k}}{(2a)_{k} (2b)_{k}(a+b)_{k}}\cdot{}_5F_{4}\!\left(\!\!\begin{array}{c} -k, a,b ,\frac{1}{2}-k-a-b-1, f+2\\ a+b+\frac{3}{2}, 1-a-k,1-b-k,f\end{array}\right).
\end{multline*}

\bibliographystyle{siam} 
\bibliography{references.bib}
\end{document}